    \numberwithin{equation}{section}
    \newcommand{\beq}{\begin{equation}}
    \newcommand{\eeq}{\end{equation}}
    \newcommand{\beqs}{\begin{eqnarray*}}
    \newcommand{\eeqs}{\end{eqnarray*}}
    \newcommand{\beqn}{\begin{eqnarray}}
    \newcommand{\eeqn}{\end{eqnarray}}
    \newcommand{\beqa}{\begin{array}}
    \newcommand{\eeqa}{\end{array}}
    \def\lra{\longrightarrow}
    \def\bc{\begin{center}}
    \def\ec{\end{center}}
    \def\begeq{\begin{equation}}
    \def\endeq{\end{equation}}
    \def\and{\quad{\rm and}\quad}
    \let\lra=\longrightarrow
    \def\mapright\#1{\, \smash{\mathop{\lra}\limits^{\#1}}\, }
    \newtheorem{prop}{Proposition}[section]
    \newtheorem{theo}[prop]{Theorem}
    \newtheorem{claim}[prop]{Claim}
    \newtheorem{rem}[prop]{Remark}
    \newtheorem{defi}[prop]{Definition}
\begin{document}

     \title{Lower bound of modified $K$-energy on a Fano manifold with degeneration for K\"ahler-Ricci solitons}

% An explicit formula  for the   second variation of Perelman's entropy
    %\date{-}

    \author{Liang  Zhang}

    \subjclass[2000]{Primary: 53C25; Secondary: 53C55,
    58J05,  19L10}
    \keywords {   K\"ahler-Ricci flow,   K\"ahler-Ricci solitons,   deformation space of complex structure }
    \address{School of Mathematical Sciences, Peking
    University,  Beijing 100871,  China.}

    \email{ tensor@pku.edu.cn}
 %\thanks {$\dag$  Partially supported by NSFC Grants 11971423}
    %\thanks {* Partially supported by NSFC Grants 11771019 and BJSF Grants Z180004.}

\begin{abstract}
In this paper, we extend Tosatti's method to study the lower boundedness of modified $K$-energy on a Fano manifold and apply this result to study the relative $K$-stability of the deformation space of a K\"ahler Ricci soliton.

%We generalize a result of X.X. Chen to the K\"ahler Ricci soliton case by showing the lower bound of modified K energy. And this generalization implies that small deformation of K\"ahler Ricci soliton where soliton vector field can be lifted is K-semistable.
\end{abstract}

\date{}

\maketitle

\tableofcontents

\setcounter{section}{-1}

\section{Introduction}
Let $(M,J)$ be a Fano manifold with soliton vector field $X$. By the virtue of Yau-Tian-Donaldson conjecture, the study of K\"ahler Ricci soliton is related to the notion of $K$-stability for $(M,X)$. For example, it is well known that the existence is equivalent to the $K$-polystability (see \cite{BN} or\cite{Sz}). We are interested to establish the semistable version of Yau-Tian-Donaldson correspondence. %We want to show that the lower boundedness of modified K energy is equivalent to the K-semistability.

When $X=0$, Li \cite{Li17} solved this problem by showing a lot of equivalent characterization of $K$-semistability. The most important contribution of his proof is the implication from $K$-semistablity to the lower boundedness of $K$-energy. This is a generalization of the result of Chen \cite{CS08} and  Tosatti \cite{To12} who derived the lower boundedness under the assumption that $M$ admits a smooth degeneration with K\"ahler Einstein metric.
%the boundedness of K energy was derived by Chen under the assumption that $(M,J)$ can degenerate to a smooth K\"ahler Einstein manifold\cite{CS08}. We can also get the boundedness of K energy if $(M,J)$ admits a sequence of K\"ahler metrics converging to a K\"ahler Einstein manifold in $C^{\infty}$ topology\cite{ZhT}. Recently, Li has given a proof of the equivalence of K-semistability and lower boundedness of K-energy even for singular degeneration \cite{Li17}.

However, for the nontrivial soliton case, it seems that the implication remains unknown. Fortunately, we still know that the $K$-semistabily is equivalent to the existence of $K$-polystable degeneration \cite{Li20}. Thus this problem can be reduced to researching whether the existence of polystable degeneration implies the lower boundedness of the modified $K$-energy. The main purpose of this paper is to derive the implication under the assumption that the polystable degeneration is smooth.

Our method is a generalization of Tosatti's proof \cite{To12} for the K\"ahler Einstein case. The key technique of his proof is a slope-type inequality about the $K$-energy, which was discovered by Chen \cite{CS09}. This inequality was proved by many different methods (see also \cite{CS12}) and had also been used to prove the lower boundedness of K energy along Calabi flows \cite{CS14}.

Note that this slope-type inequality can be generated for the modified $K$-energy (and other energy in more general situations \cite{AB}).
We will prove the following theorem in Section 2:
 %modified K-energy $\mu_{\omega}$ in \cite{TZ02} for an extremal vector field $X$ which is a holomorphic vector field such that the modified Futaki invariants is vanishing on $\eta_r(M,J)$, the reductive Lie subgroup of holomorohic vector field. and prove that if a Fano manifold $M$ admits a K\"ahler Ricci soliton $(\omega_{KS},X)$ then the modified K-energy related to the holomorphic vector field is bounded from below. The boundedness of modified $K$-energy has a lot of applications. For example, in \cite{TZZZ} Tian-Zhang-Zhang-Zhu use it to study the energy level.

%Later on, Berman-Nystrom \cite{BN} generalize the notion of $K$-stability to the K\"ahler Ricci soliton case and prove that the existence of K\"ahler Ricci soliton implies $K$-Polystability. And their proof also shows that the boundedness of modified $K$-energy implies $K$-semistability.

%On the other hand, there are a lot of work of studying the lower bound of $K$-energy. In \cite{CS08}, Chen study the lower bound of $K$-energy along geodesics. He prove that if the manifold admits a smooth test configuration with central fiber admits a csck metric, the K energy is bounded from below.  This result is reproved by Tosatti \cite{To12} by using a slope-type inequality of $K$-energy which is proved by Chen in \cite{CS09}.

%In this paper, we will extend the proof of Tosatti to the soliton case and derive the following theorem which is a generalization of Chen's result \cite{CS08}.

\begin{theo}\label{main-theorem}
Let $\pi:\mathcal{M}\mapsto \mathbb{C}$ be a smooth special degeneration associated to the soliton action induced by $X$. Suppose that there is a $T\times S^1$ invariant K\"ahler metric near central fiber (c.f. Section 1) and the central fiber $M_0$ admits a K\"ahler Ricci soliton. Then the modified $K$-energy on $M$ is bounded from below.
\end{theo}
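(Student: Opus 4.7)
The plan is to generalize Tosatti's argument from \cite{To12} by replacing the Chen-Sun slope-type inequality for the $K$-energy with its analogue for the modified $K$-energy $\mu_X$ (as indicated by the framework of \cite{AB}). Concretely, for every $T$-invariant K\"ahler potential $\phi$ on $M$, I aim to produce a $T$-invariant potential $\phi_0$ on $M_0$ with
\[
\mu_X(\phi) \;\geq\; \mu_{X, M_0}(\phi_0) \;-\; C,
\]
from which the theorem follows immediately: since $M_0$ admits a K\"ahler-Ricci soliton adapted to $X$, $\mu_{X,M_0}$ is bounded from below on $M_0$ by Tian-Zhu's theorem.

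To construct $\phi_0$, let $\omega$ denote the given $T \times S^1$-invariant K\"ahler form on a neighborhood of $M_0$ in $\mathcal{M}$, with fiber restrictions $\omega_t$. The $\mathbb{C}^*$-action defining the special degeneration, whose infinitesimal generator restricts on each fiber to the holomorphic extension of the soliton field $X$, provides biholomorphisms $\sigma_s : M_1 \to M_{e^{-s}}$ for $s \geq 0$. Transporting $\phi$ via $\sigma_s^{\ast}$, together with the decomposition $\sigma_s^{\ast}\omega_{e^{-s}} = \omega_1 + \sqrt{-1}\,\partial\bar\partial\,\psi_s$, yields a one-parameter family of $T$-invariant potentials $\phi(s)$ on $(M_1, \omega_1)$ which, by smoothness of $\mathcal{M}$, converges as $s \to \infty$ to a $T$-invariant potential $\phi_0$ on $(M_0, \omega_0)$. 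The central analytic input is a slope-type inequality along this path,
\[
\liminf_{s \to \infty} \frac{\mu_X(\phi(s))}{s} \;\geq\; -\mathrm{DF}_X(\mathcal{M}),
\]
where $\mathrm{DF}_X(\mathcal{M})$ is the modified Donaldson-Futaki invariant of the degeneration. Because $M_0$ carries a K\"ahler-Ricci soliton compatible with $X$, $\mathrm{DF}_X(\mathcal{M}) = 0$, so $\mu_X(\phi(s))$ cannot decrease faster than sublinearly in $s$; combined with the convergence $\mu_X(\phi(s)) \to \mu_{X, M_0}(\phi_0)$ supplied by smoothness of the family, this delivers the desired comparison.

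The main obstacle will be rigorously establishing the slope-type inequality for $\mu_X$ in a form applicable to this geometric situation, extending Chen-Sun \cite{CS09, CS12} and the computations of \cite{AB}. Three issues require care: (i) identifying the soliton field $X$ as a single holomorphic vector field on $\mathcal{M}$ tangent to the fibers and compatible with the varying complex structures $J_t$, which is ensured by the hypothesis that the degeneration is associated to the $X$-action; (ii) uniform control near $t = 0$ of the soliton weight $e^{\theta_X(\omega_t)}$ appearing in the definition of $\mu_X$, for which the $T \times S^1$-invariance of $\omega$ and the smoothness of $\mathcal{M}$ are essential, since they force $\theta_X$ to extend smoothly across the central fiber; and (iii) adapting the entropy-plus-energy decomposition of $\mu_X$ and the associated integration-by-parts computations to absorb the extra terms produced by the non-trivial action of $L_X$ on $\phi(s)$. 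The $T$-invariance of the potentials is preserved throughout the path because the degenerating $\mathbb{C}^*$-action commutes with $T$, so that the modified $K$-energy is well-defined at every step; once the slope inequality is secured, the rest of the argument follows the skeleton of \cite{To12}.
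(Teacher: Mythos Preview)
There are two genuine gaps in your proposal.

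First, you misidentify the vector fields. The $\mathbb{C}^*$-action defining the degeneration is generated by a vector field $v$ (whose imaginary part generates the $S^1$ factor), and this is \emph{not} the soliton field $X$; the hypothesis ``associated to the soliton action'' means precisely that there is a \emph{second} $\mathbb{C}^*$-action on $\mathcal{M}$, generated by a lift of $X$, which commutes with the first. The generator $v$ is not tangent to the fibers---its real part moves points between fibers---so the clause ``whose infinitesimal generator restricts on each fiber to the holomorphic extension of the soliton field $X$'' is incorrect. This matters because the soliton potential $\theta_X$ entering $\mu_X$ comes from $X$, while the path of metrics comes from $v$; the argument must keep these separate.

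Second, and more seriously, the slope inequality you invoke is of the wrong type and cannot yield a lower bound for $\mu_X(\phi)$. An inequality of the form $\liminf_{s\to\infty}\mu_X(\phi(s))/s \geq -\mathrm{DF}_X(\mathcal{M})=0$ constrains only the asymptotic rate of $\mu_X$ along the ray; it says nothing about the value $\mu_X(\phi(0))=\mu_X(\phi)$. Even granting the claimed convergence $\mu_X(\phi(s))\to\mu_{X,M_0}(\phi_0)$, ``sublinear decrease plus convergence to a finite limit'' does not bound the value at $s=0$, nor does it produce a constant $C$ independent of $\phi$. What the paper actually uses is Chen's \emph{pointwise} inequality in its modified form (Corollary~1 of \cite{AB}):
\[
\mu_\omega(\varphi)\;\geq\;\mu_\omega(\varphi_t)\;-\;d(\varphi,\varphi_t)\,\sqrt{\widetilde{Ca}(\omega_t)},
\]
where $\widetilde{Ca}$ is the modified Calabi energy and $d$ the Mabuchi length. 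Crucially, the ray $\{\varphi_t\}_{t\geq 0}$ is the degeneration path starting from the reference metric $\omega$ and is \emph{independent of $\varphi$}; the given $\varphi$ is simply joined to $\varphi_0=0$ by an arbitrary short path. After first arranging, by a $T\times S^1$-invariant perturbation of $\Omega$, that $\Omega|_{M_0}$ is the soliton metric itself, one obtains $\widetilde{Ca}(\omega_t)\leq Ce^{-2t}$ and $\mu_\omega(\varphi_t)\geq -C$, while $d(\varphi,\varphi_t)\leq Ct+D(\varphi)$. Letting $t\to\infty$, the product $d\cdot\sqrt{\widetilde{Ca}}$ tends to $0$ regardless of $D(\varphi)$, and the uniform lower bound follows. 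Your argument should be restructured along these lines; the asymptotic DF-slope and the transport of $\phi$ to a potential $\phi_0$ on $M_0$ are not the right ingredients, and indeed the lower boundedness of $\mu_{X,M_0}$ on $M_0$ is never invoked.
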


After establishing this theorem, we can apply it to study the deformation space of Eiji Inoue \cite{Ino19}, which is the same as the definition of the kernel space of second order variation of Perelman's entropy \cite{TZZ}.  We will prove:
%Combine this Theorem and the deformation theory of Eiji Inoue \cite{Ino19}, we will get the main theorem of our paper which generalize the result of Tosatti \cite{To12}:

\begin{theo}\label{semistable}
Let $(M,J_0)$ be a Fano manifold which admits a K\"ahler Ricci soliton, $(M,J)$ be a sufficiently small deformation of $(M,J_0)$. Suppose that  the soliton vector field on $(M,J_0)$ can be lifted to $(M,J)$. Then the modified K energy on $(M,J)$ is bounded from below.
\end{theo}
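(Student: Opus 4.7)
The plan is to deduce Theorem~\ref{semistable} from Theorem~\ref{main-theorem} by constructing an explicit smooth special degeneration from $(M,J)$ to $(M,J_0)$ induced by the soliton action, and then verifying the remaining technical hypotheses of the main theorem.

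First, I would use the assumption that the soliton vector field lifts to $(M,J)$ to realize $(M,J)$ as a point of Inoue's deformation space, which parametrizes complex structures on $M$ near $J_0$ admitting a holomorphic lift of $X$. The complexified soliton action $\exp(\tau JX)$, for $\tau\in\mathbb{C}$, acts on this deformation space. Since $(M,J)$ is taken sufficiently close to $(M,J_0)$, the orbit of $J$ under the induced $\mathbb{C}^*$-action extends across the origin to a holomorphic family $\pi:\mathcal{M}\to\mathbb{C}$ whose central fiber is $M_0=(M,J_0)$ and whose other fibers are biholomorphic to $(M,J)$. Smoothness of the total space follows from the smoothness of Inoue's Kuranishi slice at $J_0$, and the $\mathbb{C}^*$-action on $\mathcal{M}$ covering the standard action on $\mathbb{C}$ is by construction the one generated by $X$; hence $\pi$ is a smooth special degeneration associated to the soliton action, in the sense of Theorem~\ref{main-theorem}.

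Next, I would construct the required $T\times S^1$-invariant K\"ahler metric in a neighbourhood of the central fiber, where $T$ is the maximal compact torus in $\mathrm{Aut}(M_0)$ containing the closure of $\exp(\mathbb{R}X)$, and $S^1\subset\mathbb{C}^*$ is the standard rotation on the base. Starting from the K\"ahler-Ricci soliton metric $\omega_0$ on $M_0$, which is automatically $T$-invariant, I would extend $\omega_0$ to a K\"ahler form $\omega$ on a neighbourhood of $M_0$ in $\mathcal{M}$ via a $T$-equivariant local holomorphic trivialization, and then average the result over the compact group $T\times S^1$. Since $T$ and the soliton $\mathbb{C}^*$ commute inside $\mathrm{Aut}(M_0)$, the averaged form is $T\times S^1$-invariant, and it remains positive on a possibly smaller neighbourhood of $M_0$ by openness of the K\"ahler cone. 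With the degeneration and the invariant K\"ahler metric in place, Theorem~\ref{main-theorem} applies directly and gives the lower bound on the modified $K$-energy on $(M,J)$.

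The main obstacle is showing that the complex $\mathbb{C}^*$-orbit extends across $t=0$ to a smooth family with the correct central fiber, which requires the fine structure of Inoue's deformation space---in particular, that the Kuranishi slice may be chosen $X$-equivariantly and that $J_0$ is an attracting fixed point of the flow when restricted to the slice. A secondary point is that the averaging argument for the $T\times S^1$-invariant metric relies on the compactness of $T\times S^1$ and on the compatibility of the two actions, both of which are automatic in this setting.
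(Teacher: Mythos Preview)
Your overall strategy of reducing to Theorem~\ref{main-theorem} is the same as the paper's, but the construction of the degeneration has a genuine gap. You propose to use the complexified soliton action $\exp(\tau JX)$ itself as the $\mathbb{C}^*$ that degenerates $J$ to $J_0$. This cannot work: the hypothesis is precisely that $X$ lifts to a \emph{holomorphic} vector field on $(M,J)$, so every $\exp(\tau JX)$ is a biholomorphism of $(M,J)$ and the orbit of $J$ in the deformation space is the single point $\{J\}$. Equivalently, on Inoue's $T$-equivariant Kuranishi slice the torus generated by $X$ acts trivially, so it produces no test configuration at all. In a special degeneration associated to the soliton action, the degeneration vector $v$ and the soliton vector $X$ are two \emph{distinct} commuting holomorphic fields; you have supplied only one of them. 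A second, related issue is the claim that the central fiber is $(M,J_0)$: even once a correct destabilizing $\mathbb{C}^*$ is found, there is no reason its limit should be $J_0$ rather than some other polystable point.

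The paper obtains the missing one-parameter subgroup by an entirely different mechanism. It first invokes the smooth convergence of the K\"ahler--Ricci flow on $(M,J)$ to a soliton $(M_\infty,J_\infty,g_\infty)$ (Theorem~0.1 of \cite{TZZ}), embeds the flow into $\mathbb{P}^N$ via the partial $C^0$-estimate so that $\sigma_s^X$ becomes a subgroup of $SL(N+1,\mathbb{C})$, and then uses GIT to produce a one-parameter subgroup $\sigma_t\subset SL(N+1,\mathbb{C})$ commuting with $\sigma_s^X$ whose orbit closure on a fixed fiber $\widetilde M_{t_1}$ has limit cycle $\widetilde M_\infty\cong(M_\infty,J_\infty)$. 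The degeneration $\mathcal{M}\subset\mathbb{P}^N\times\mathbb{C}$ is the compactification of this orbit, with central fiber $(M_\infty,J_\infty)$ (not $(M,J_0)$), and the $T\times S^1$-invariant metric is simply the restriction of $\omega+\sqrt{-1}\,dz\wedge d\bar z$ for a Fubini--Study type $\omega$ invariant under the two commuting compact circles. The alternative route via Inoue's space mentioned in the paper's closing Remark still requires GIT to locate the destabilizing $\mathbb{C}^*$; it is never the soliton action itself.
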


We obtain a family of smooth manifolds on each of which the modified K energy is bounded from below. Furthermore, the lower boundedness of modified $K$-energy for $(M,X)$ implies that the energy level of $M$ satisfies
\begin{align}\label{entropy}
\sup_{\omega_g\in 2\pi c_1(M)}\lambda(g)=(2\pi)^{-n}(nV-N_X(c_1(M)))
\end{align}
(see \cite{DeS} and \cite{WZ1}). Thus we derive that the energy level of manifold in Theorem \ref{semistable} is independent of the complex structure, which has been observed in \cite{TZZ} by the method of K\"ahler Ricci flow.
%\begin{rem}By a result of  Dervan-Sz\'ekelyhidi  \cite{DeS} (also see \cite{WZ1}), we know that
%\begin{align}\label{maximum}\sup_{\omega_g\in 2\pi c_1(M, J_\varphi)}\lambda(g)=\lambda(g_{KS}).
%\end{align}

%Hence by \cite{BN} we conclude that small deformations of $(M,J_0)$ such that the soliton vector field on $(M,J_0)$ can be lifted to $(M,J)$ are $K$-semistable.
This paper is organized as follows:

In Section 1, we recall the notion of  special degeneration and study some basic setups. In Section 2, we prove Theorem \ref{main-theorem}. Finally in Section 3, we prove Theorem \ref{semistable} by showing that the manifold appearing in Theorem \ref{semistable} admits a smooth special degeneration.
\section{Preliminary}
In this section, we recall the notion of  special degeneration and study some basic setups.
Let $M$ be a Fano manifold with $X$ being a soliton vector field on $M$.

Recall that   a  special degeneration of a  Fano manifold $M$ is a  normal variety $\mathcal M$ with a $\mathbb C^*$-action satisfying the follow conditions \cite{Ti97}:
\begin{enumerate}
\item There exists a flat $\mathbb C^*$-equivarant map $\pi: \mathcal M\to \mathbb C$ such that $\pi^{-1}(t)$ is biholomorphic to $M$ for any $t\neq 0$;

\item There exists an holomorphic line bundle $\mathcal L$ on $\mathcal M$ such that for any $t\neq 0$, $\mathcal L|_{\pi^{-1}(t)}$ is isomorphic to $K_M^{-r}$ for some integer $r>0$;

    \item The center $M_0= \pi^{-1}(t)$ which is a $Q$-Fano variety.
\end{enumerate}
%Denote the action of $\mathbb{C}^*$ on $\mathcal{X}$ by the map
%$$\rho:\mathbb{C}^*\mapsto {\rm Aut}(\mathcal{X})$$
%For $z\in \mathbb{C}$, let $\mathcal{X}_{z}=\pi^{-1}z$, we define the map $F_{z}:M\mapsto \mathcal{X}_{z}$ by
%$$F_{z}(m)=\theta(m,z).$$
%$F_{z}$ is a family of isomorphism of complex manifold. Suppose that $\Omega$ is a $T\times S^1$ invariant K\"ahler metric on $\mathcal{X}$. Let $V$ be the real vector field on $\mathcal{X}$ which generated the action of $S$ on $\mathcal{X}$. Thus we have
%$$\mathcal{L}_{V}\Omega=di_{V}\Omega=0.$$
%Since $H^1(\mathcal{X},\mathbb{R})=0$, we may find a smooth function $H_{V}$ on $\mathcal{X}$ such that
%$$i_{V}\Omega=dH_{V}.$$
%Similarly, let $W$ be the real vector field on $\mathcal{X}$ which generated the action of $T$ on $\mathcal{X}$, we may find smooth function $H_{W}$ such that
%$$i_{W}\Omega=dH_{W}.$$
%Let $i_z:\mathcal{X}_{z}\hookrightarrow \mathcal{X}$ be the inclusion map. Let
The following definition can be seen in \cite{WZZ16}.

\begin{defi}
 $\mathcal{M}$ is called a special degeneration associated to the soliton action induced by $X$ if $\sigma_t^{v}$ communicates to $\sigma_t^X$, where $\sigma_t^{X}$ and $\sigma_t^v$ are two lifting one-parameter subgroups on $\mathcal{M}$ induced by $X$ and the holomorphic vector field $v$ associated to the $\mathbb{C}^*$ action, respectively.
\end{defi}

If $M_0$ is smooth and there exists an neighborhood $\Delta=\{|z|<\epsilon\}$ such that $\pi^{-1}(\Delta)$ admits a $T\times S^1$ invariant K\"ahler metric $\Omega$. We call $\mathcal{M}$ a smooth special degeneration with a $T\times S^1$ invariant K\"ahler metric near central fiber. Here $T$ and $S$ are one-parameter subgroups on $\mathcal{M}$ induced by $\xi={\rm Im}(X)$ and ${\rm Im}(v)$, respectively.

Since $M_0$ is smooth, we know that $\mathcal{M}$ is smooth and $\pi$ is holomorphic proper submersion. By Ehresmann's theorem, we can find a neighborhood $\Delta=\{|z|<\epsilon\}$ of 0 and a diffeomorphism
\begin{align}\label{diffeomorphism}
 F:\underline{M}\times \Delta\mapsto \pi^{-1}(\Delta)
\end{align}
such that $\pi(F(m,z))=z$.  Here we use $\underline{M}$ to denote the underlying differential manifold of $(M, J)$.

%Without loss of generality, we assume $1\in \Delta$.
By the definition of $\mathcal{M}$, there is a $T\times \mathbb{C}^*$ action on $\mathcal{M}$ such that $\pi$ is $T\times \mathbb{C}^*$ equivalent. We may induce a local action of $T\times \mathbb{C}^*$ on $\underline{M}\times \Delta$ by $F$, which satisfying:
\begin{align}
(w,s)\cdot (m,z)=F^{-1}((w,s)\cdot F(m,z)),
\end{align}
if $sz\in \Delta$. %Let $\mathcal{J}=F^*J_{\mathcal{M}}$ be the complex structure. Here $J_{\mathcal{M}}$ is the complex structure of $\mathcal{M}$.
Note that $T\times S^1$ maps $\underline{M}\times \Delta$ to itself. Hence this local action forces $\underline{M}\times \Delta$ to admit a $T\times S^1$ action.

We can also induce a K\"ahler metric on $\underline{M}\times \Delta$ through $F$. Since $F$ is $T\times S^1$ equivalent, this metric is also  $T\times S^1$ invariant. We still denote by $\Omega$. Let $V$ be the real vector field on $\underline{M}\times \Delta$ which generates the action of $S^1$ on $\underline{M}\times \Delta$. Thus we have
\begin{align}\label{invariant-metric}
 \mathcal{L}_{V}\Omega=d\iota_{V}\Omega=0.
\end{align}
Since $H^1(\underline{M}\times \Delta,\mathbb{R})=0$, we may find a smooth function $H_{V}$ on $\underline{M}\times \Delta$ such that
\begin{align}
\iota_{V}\Omega=dH_{V}.
\end{align}
Similarly, let $W$ be the real vector field on $\underline{M}\times \Delta$ which generates the action of $T$ on $\underline{M}\times \Delta$, and we may find a smooth function $H_{W}$ such that
\begin{align}\label{soliton-potential-global}
  \iota_{W}\Omega=dH_{W}.
\end{align}

Let $\mathcal{J}=F^*J_{\mathcal{M}}$ be the complex structure induced by $F$. Here $J_{\mathcal{M}}$ is the complex structure of $\mathcal{M}$. It is easy to see that $\sqrt{-1}W+\mathcal{J}W$ tangents to each fiber $M_z$ and it's restriction $X_z=\sqrt{-1}W|_{M_z}+\mathcal{J}|_{M_z}W|_{M_z}$ is the soliton vector field on $M_z$. By restricting (\ref{soliton-potential-global}) %to $M_z$, we have
%\begin{align}\label{soliton-potential}
%  i_{W|_{M_z}}\Omega|_{M_z}=d(H_{W}|_{M_z}).
%\end{align}
we see that the soliton potential of $X_z$ on $M_z$ respect to $\Omega|_{M_z}$ is $H_{W}|_{M_z}$.

%It is easy to see that the action of $T$ preserves the fiber $M_{z}=\underline{M}\times \{z\}$. Meanwhile, we know that the action of $T$ on $M_z$ is generated by the imaginary of its soliton vector field. Then $\widetilde{X}_z=\sqrt{-1}W|_{M_z}+\mathcal{J}|_{M_z}W|_{M_z}$ is the soliton vector field on $M_z$. By restricting (\ref{soliton-potential-global}) to $M_z$, we have
%\begin{align}\label{soliton-potential}
%  i_{W|_{M_z}}\Omega|_{M_z}=d(H_{W}|_{M_z}).
%\end{align}
%So we conclude that the soliton potential of $\widetilde{X}_z$ on $M_z$ respect to $\Omega|_{M_z}$ is $H_{W}|_{M_z}$.

In addition, we may construct a family of metric on $M$ by using the action of $\mathbb{C}^*$. Let
\begin{align}
F_{t}:\underline{M}\times \Delta\mapsto \underline{M}\times \Delta, F_{t}(m,z)=e^{-t}\cdot(m,z), t>0
\end{align}
and $f_t=F_t\circ i$, where $i:M\mapsto \underline{M}\times \Delta, i(m)=(m,1)$. We can define
\begin{align}
 \omega_t=f_t^*\Omega
\end{align}
as a family of K\"ahler metric on $M$. We will show that this family decay fast in some sense.

Let $\rho_t: M_{e^{-t}}\mapsto M$ be the inverse of $f_t:M\mapsto f_t(M)$. Note that $\rho_t^*\omega_t=\Omega|_{M_{e^{-t}}}$. We conclude that
\begin{align}\label{kahler-form}
  \|\rho_t^*\omega_t-\Omega|_{M_0}\|_{g}\leq Ce^{-t}.
\end{align}
Here $g$ is a fixed Riemmannian metric on $\underline{M}$.% Let $G$ be the metric on $\underline{M}\times \Delta$ defined by $\Omega$ and $\mathcal{J}$. $G|_{M_{e^{-t}}}$ is the Riemmannian metric on $M_{e^{-t}}$ defined by $\mathcal{J}|_{M_{e^{-t}}}$ and $\Omega|_{M_{e^{-t}}}$. Let $g_{t}$ be the metric on $M$ defined by $\omega_t$. Then we have $\rho_t^*g_t=G|_{M_{e^{-t}}}$. Therefore, we also have
%\begin{align}\label{Riemann-metric}
%  \|\rho_t^*g_t-G|_{M_0}\|_{g}\leq Ce^{-t}.
%\end{align}

In addition, we may write $\omega_t$ as $\omega_t=\omega_0+dd^c\varphi_t$. Since
\begin{align}
  \frac{d}{dt}\omega_t=dd^cf_t^*H_V.
\end{align}
We may assume that $\dot{\varphi}_t=f_t^*H_V$. As a result, we have that
\begin{align}\label{potential}
  \|\rho_t^*\dot{\varphi}_t-H_V|_{M_0}\|_{g}\leq Ce^{-t}.
\end{align}

Finally, since the isomorphism $f_t$ pulls back the soliton vector field $X_{e^{-t}}$ on $M_{e^{-t}}$ to $X$, we conclude that the soliton potential $\theta_t=\theta_{X}(\omega_t)$ of $X$ respect to $\omega_t$  is $f_t^*H_W$. Consequently, we have
\begin{align}\label{soliton-potential-estimate}
  \|\rho_t^*\theta_t-H_W|_{M_0}\|_{g}\leq Ce^{-t}.
\end{align}

\section{Proof of Theorem 0.1}
In this section we prove the Theorem \ref{main-theorem}.

\begin{proof}[Proof of Theorem 0.1]
  Let $\Omega$ be a $T\times S^1$ invariant  K\"ahler metric on $\underline{M}\times \Delta$.

\begin{claim}\label{invariant-metric-global}
  We may assume that $\Omega|_{M_0}$ is the soliton metric of $M_0$ respect to soliton vector field $X_0$.
\end{claim}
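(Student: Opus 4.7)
The plan is to modify $\Omega$ by adding $dd^c\tilde\psi$ for a $T\times S^1$-invariant smooth function $\tilde\psi$ on $\pi^{-1}(\Delta)$ so that the new K\"ahler form restricts on $M_0$ to the K\"ahler Ricci soliton metric.

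First I would observe that after rescaling $\Omega$ by a positive constant, $\Omega|_{M_0}$ represents the class $2\pi c_1(M_0)$, the same K\"ahler class as the K\"ahler Ricci soliton $\omega_{\mathrm{KRS}}$ on $M_0$. Since the degeneration is associated to the soliton action, $S^1$ and $T$ commute as subgroups of $\mathrm{Aut}(M_0)$ and both lie in the centralizer of $X_0$; by the uniqueness of K\"ahler Ricci solitons up to this centralizer, we may choose $\omega_{\mathrm{KRS}}$ to be $T\times S^1$-invariant. Applying the $dd^c$-lemma on the compact K\"ahler manifold $M_0$ produces a smooth $\psi_0$ with $\omega_{\mathrm{KRS}}=\Omega|_{M_0}+dd^c\psi_0$, and averaging $\psi_0$ over the compact group $T\times S^1$ makes it invariant without changing the equation.

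Next I would extend $\psi_0$ to a $T\times S^1$-invariant smooth function $\tilde\psi$ on $\pi^{-1}(\Delta)$. Using the diffeomorphism $F$ from \eqref{diffeomorphism} one pulls back $\psi_0$ along the projection $\underline M\times\Delta\to\underline M$, views the result as a function on $\pi^{-1}(\Delta)$, then averages over $T\times S^1$ (which acts on $\pi^{-1}(\Delta)$ since $T$ preserves each fiber and $S^1\subset\mathbb{C}^*$ preserves the disk $\Delta$). The averaged extension still restricts on $M_0$ to $\psi_0$ because the latter is already $T\times S^1$-invariant. Setting $\tilde\Omega=\Omega+dd^c\tilde\psi$, the new form is $T\times S^1$-invariant and, by construction, $\tilde\Omega|_{M_0}=\omega_{\mathrm{KRS}}$. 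Positivity on a smaller neighborhood $\pi^{-1}(\Delta'')$ of $M_0$ follows from positivity on the compact fiber $M_0$ together with continuity of the eigenvalues in the base variable; replacing $\Omega$ by $\tilde\Omega$ and $\Delta$ by $\Delta''$ does not affect the constructions of Section 1, which only use the $T\times S^1$-invariance and the K\"ahler property of $\Omega$ near $M_0$.

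The main technical subtlety lies in verifying that $\omega_{\mathrm{KRS}}$ can be taken $T\times S^1$-invariant; once that is arranged the remaining construction is a cohomology-plus-cutoff argument. The extension and averaging steps are standard, and positivity in a neighborhood is immediate by compactness.
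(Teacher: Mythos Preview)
Your approach is essentially the same as the paper's: find a $T\times S^1$-invariant potential on $M_0$, extend it invariantly to the family, and then arrange positivity. You add welcome detail the paper omits (matching the K\"ahler class by rescaling, and invoking uniqueness of the soliton to justify $T\times S^1$-invariance of $\omega_{\mathrm{KRS}}$).

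There is, however, a small gap in your positivity step. You argue that ``positivity on a smaller neighborhood $\pi^{-1}(\Delta'')$ of $M_0$ follows from positivity on the compact fiber $M_0$ together with continuity''. But positivity of $\tilde\Omega|_{M_0}=\omega_{\mathrm{KRS}}$ is only positivity of the restriction to the \emph{fiber} directions $T M_0$; it says nothing about $\tilde\Omega$ on tangent vectors with a nonzero base component $\partial/\partial z$. Since the extension $\tilde\psi$ is not holomorphic in any useful sense (the trivialization $F$ is only smooth), $dd^c\tilde\psi$ can be arbitrarily negative in the base or mixed directions even at points of $M_0$, so continuity in $z$ does not help. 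The paper fixes exactly this by replacing $\Omega+dd^c\psi$ with $\Omega+dd^c\psi+a\,dd^c|z|^2$ for $a\gg 0$: the extra term is supported purely in the base direction, is $T\times S^1$-invariant, vanishes on each fiber (so the restriction to $M_0$ is unchanged), and for large $a$ dominates whatever negativity $dd^c\psi$ introduces transversally; one then shrinks $\Delta$. With that one-line correction your argument matches the paper's.
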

Let $\omega_t$ be the family of metric on $M$ defined in Section 1 and $\omega=\omega_0$. We will prove that $\mu_{\omega}$ is bounded from below.

 Let $\varphi\in \mathcal{M}_{\omega}$, where
 \begin{align}
\mathcal{M}_{\omega}=\{\varphi\in C^{\infty}(M)|\omega+dd^c\varphi>0, {\rm Im}(X)(\varphi)=0\}.
 \end{align}
 We may choose a path $\varphi_{t}, t\in [-1,0]$ such that $\varphi_{-1}=\varphi$ and $\varphi_0=0$. Connecting it with $\varphi_t, t\geq0$ we get a ray $\{\varphi_t:t\geq -1\}$. Then for $t>0$, the derivative of $\mu_{\omega}(\varphi_t)$ is
 \begin{align}
   \frac{d}{dt}\mu_{\omega}(\varphi_t)&=-\int_M\dot{\varphi}(t)(\Delta_{g_t}+X)(h_{\omega_t}-\theta_t)\omega_t^n\notag\\
   &=-\int_{\underline{M}}\rho_t^*\dot{\varphi}(t)(\Delta_{\rho_t^*g_t}+X_{e^{-t}})(h_{\rho_t^*\omega_t}-\rho_t^*\theta_t)\rho_t^*\omega_t^n.
 \end{align}
Since $\Delta_{\rho_t^*g_t}h_{\rho_t^*\omega_t}=R(\rho_t^*g_t)-n$, by (\ref{kahler-form}) we see that
 \begin{align}\label{Ricci-potential}
   \|h_{\rho_t^*\omega_t}-h_{\Omega|_{M_0}}\|\leq Ce^{-t}.
 \end{align}
It follows from (\ref{soliton-potential-estimate}) and (\ref{Ricci-potential}) that
\begin{align}
  \|h_{\rho_t^*\omega_t}-\rho_t^*\theta_t-(h_{\Omega|_{M_0}}-H_W|_{M_0})\|\leq Ce^{-t}.
\end{align}
Note that $\Omega|_{M_0}$ is a soliton metric and $H_W|_{M_0}$ is soliton potential. We have
\begin{align}\label{central-equation}
  h_{\Omega|_{M_0}}=H_W|_{M_0}.
\end{align}
It follows that
\begin{align}\label{estimate}
  \|h_{\rho_t^*\omega_t}-\rho_t^*\theta_t\|\leq Ce^{-t}.
\end{align}
Meanwhile, by (\ref{potential}) and (\ref{kahler-form}) and the fact that
\begin{align}
\|X_{e^{-t}}-X_0\|\leq Ce^{-t},
\end{align}
we derive that $\frac{d}{dt}\mu_{\omega}(\varphi_t)$ converges exponentially to
\begin{align}
  -\int_{\underline{M}}H_{V}(\Delta_{G|_{M_0}}+X_0)(h_{\Omega|_{M_0}}-H_W|_{M_0})(\Omega|_{M_0})^n=0.
\end{align}
As a result, we have
\begin{align}\label{modified-K-energy}
  \mu_{\omega}(\varphi_t)\geq -C.
\end{align}

Furthermore, we have the Chen inequality (see Corollary 1 in \cite{AB}) for modified $K$-energy
\begin{align}\label{slope inequality}
  \mu_{\omega}(\varphi_{-1})\geq\mu_{\omega}(\varphi_t)-d(\varphi_{-1},\varphi_{t})\sqrt{\widetilde{Ca}(\omega_t)}.
\end{align}
 Here
 \begin{align}
 d(\varphi_{-1},\varphi_{t})=\int_{-1}^t\sqrt{\int_M (\dot{\varphi}(s))^2\omega_s^n}ds
 \end{align}
and
\begin{align}
 \widetilde{Ca}(\omega_t)=\int_{M}[(\Delta_{g_t}+X)(h_{\omega_t}-\theta_t)]^2e^{2\theta_t}\omega_t^n.
\end{align}
By (\ref{kahler-form}) and (\ref{estimate}), we conclude that
 \begin{align}
   |\widetilde{Ca}(\omega_t)-\int_M[(\Delta_{G|_{M_0}}+X_0)(h_{\Omega|_{M_0}}-H_W|_{M_0})]^2e^{2H_W|_{M_0}}(\Omega|_{M_0})^n|\leq Ce^{-2t}.
 \end{align}
 Hence by (\ref{central-equation}), it follows that
 \begin{align}\label{calabi-energy}
   \widetilde{Ca}(\omega_t)\leq Ce^{-2t}.
 \end{align}

Finally, we see that for $s>0$,
 \begin{align}
   \int_M (\dot{\varphi}(s))^2\omega_s^n&=\int_M(\rho_t^*\dot{\varphi}(s))^2\rho_t^*\omega_s^n.
 \end{align}
It follows from (\ref{kahler-form}) and (\ref{potential}) that $\int_M (\dot{\varphi}(s))^2\omega_s^n$ is uniformly bounded for $s>0$. Thus we have
\begin{align}\label{distance}
  d(\varphi_{-1},\varphi_{t})=\int_{-1}^0\sqrt{\int_M (\dot{\varphi}(s))^2\omega_s^n}ds+\int_{0}^t\sqrt{\int_M (\dot{\varphi}(s))^2\omega_s^n}ds\leq Ct+D.
\end{align}
Combining (\ref{modified-K-energy}), (\ref{slope inequality}), (\ref{calabi-energy}) and (\ref{distance}), we conclude that
\begin{align}
 \mu_{\omega}(\varphi)=\mu_{\omega}(\varphi_{-1})\geq-C.
\end{align}
Thus $\mu_{\omega}$ is bounded from below. We finish the proof.
 \end{proof}

To complete the proof, we prove Claim (\ref{invariant-metric-global}) as following:
\begin{proof}[Proof of Claim (\ref{invariant-metric-global})]
Since we assume that $M_0$ admits a Kahler Ricci soliton, and the action of $S^1$ commutes with the action of $T$ on $M_0$, we may find a $T\times S^1$ invariant function $\widehat{\psi}$ on $M_0$ such that $\Omega|_{M_0}+dd^c\widehat{\psi}$ is the soliton metric of $M_0$ with respect to soliton vector field $X_0$. As $T\times S^1$ is compact, we can extend $\widehat{\psi}$ to be a $T\times S^1$ invariant smooth function on $\underline{M}\times \Delta$. We denote it by $\psi$. Shrinking $\Delta$ if it is necessary, we may assume that $\Omega+dd^c\psi+add^c|z|^2$ is a $T\times S^1$ invariant K\"ahler metric on $\underline{M}\times \Delta$ such that  $(\Omega+dd^c\psi+add^c|z|^2)|_{M_0}$ is a soliton metric of $M_0$ respect to soliton vector field $X_0$. Here $a>0$ is a big positive number. As a result, replacing $\Omega$ by $\Omega+dd^c\psi+add^c|z|^2$, we conclude that Claim (\ref{invariant-metric-global}) is true.
\end{proof}

\section{Proof of the Theorem \ref{semistable}}
In this section we prove the Theorem \ref{semistable}.

\begin{proof}[Proof of the Theorem \ref{semistable}]
  First at all, we may construct a smooth special degeneration associated to the soliton action on $(M,J)$. We refer the readers to the proof of Theorem 0.2 in \cite{TZZ} for the details. Since the soliton vector field of $(M,J_0)$ can be lifted to $(M,J)$, we know that the K\"ahler Ricci flow $(M,g(t))$ on $(M,J)$ converges smoothly to a K\"ahler Ricci soliton $(M_{\infty},J_{\infty},g_{\infty})$ by the Theorem 0.1 in that paper. Then we can embed $(M,g(t))$ to a  projective space $\mathbb{P}^N$ by partial $C^0$-estimate for $t\geq t_0$ with $\sigma_s^X$ being regarded as a subgroup of $SL(N+1,\mathbb{C})$. By GIT, we will find a fixed number $t_1\geq t_0$, and a one parameter subgroup $\sigma_{t}\subseteq SL(N+1,\mathbb{C})$ which commutes with $\sigma_s^X$  such that $\sigma_{t}(\widetilde{M}_{t_1})$ converges to a limit cycle $\widetilde{M}_{\infty}$ which is isomorphic to $(M_{\infty},J_{\infty})$. Hence we can construct a special degeneration $\mathcal{M}\subset \mathbb{P}^N\times \mathbb{C}$ as the compactification of
  \begin{align}
  S=\{(x,t)\in \mathbb{P}^N\times \mathbb{C} |x\in \sigma_{t}(\widetilde{M}_{t_1})\},
  \end{align}
  whose central fiber is $\widetilde{M}_{\infty}$\cite{Mum77}.
  There is a nature way to introduce the action of $\mathbb{C}^*\times \mathbb{C}^*$ on $\mathbb{P}^N\times \mathbb{C}$ as
  \begin{align}
 (t,s)(x,z)=(\sigma_t\sigma_s^X(x),tz),~(t,s)\in \mathbb{C}^*\times \mathbb{C}^*,~(x,z)\in\mathbb{P}^N\times \mathbb{C}.
  \end{align}
  Note that $S$ is invariant under the action of $\mathbb{C}^*\times \mathbb{C}^*$. We know that $\mathcal{M}$ is also invariant. Thus $\mathcal{M}$ is a special degeneration of $(M,J)$ associated to the soliton action. Since the central fiber is $\widetilde{M}_{\infty}$ and this family is flat, we conclude that it is also a smooth special degeneration and $\mathcal{M}$ is a smooth submanifold of $\mathbb{P}^N\times \mathbb{C}$ (see proposition 10.2 in \cite{GTM52}).

  Secondly, as the compact subgroup of $\sigma_t$ commutes with the compact subgroup of  $\sigma_s^X$, we may find a K\"ahler metric $\omega$ of $\mathbb{P}^N$ such that $\omega$ is invariant under the action of these two compact subgroups. Therefore, we can construct a $T\times S^1(\subseteq \mathbb{C}^*\times \mathbb{C}^*)$ invariant metric $\mathbb{P}^N\times \mathbb{C}$ as
  \begin{align}
  \Omega=\omega+\sqrt{-1}dz\wedge d\overline{z}.
  \end{align}
  Restricting $\Omega$ to the $\mathcal{M}$, we derive a $T\times S^1$ invariant metric of $\mathcal{M}$.

  Finally, we can apply the Theorem \ref{main-theorem} to finish the proof of Theorem \ref{semistable}.
\end{proof}

\begin{rem}
  We have shown that for  K\"ahler Ricci soliton $(M,J,\omega_{FS})$, the soliton metric $\omega_{FS}$ can be viewed as a K\"ahler  metric on each manifold appearing in the deformation family of it \cite{TZZ}. So we can construct a $K$ invariant K\"ahler metric on the deformation space. Here $K$ is a maximal compact subgroup of ${\rm Aut}_r(M,J)$ respect to $\omega_{FS}$. Hence, by GIT and Eiji Inoue's deformation Theorem \cite{Ino19} we may construct a smooth degeneration with $T\times S^1$ invariant K\"ahler metric near central fiber for each manifold appearing in this family. As a result,  we can also prove Theorem \ref{semistable} by Theorem \ref{main-theorem} and this construction.
\end{rem}

   \end{document}